\numberwithin{equation}{section}
\def\eps{\varepsilon }
\def\eps{\varepsilon}
\newcommand\br{\begin{remark}}
\newcommand\er{\end{remark}}
\newcommand\bp{\begin{pmatrix}}
\newcommand\ep{\end{pmatrix}}
\newcommand\be{\begin{equation}}
\newcommand\ee{\end{equation}}
\newcommand\ba{\begin{equation}\begin{aligned}}
\newcommand\ea{\end{aligned}\end{equation}}
\newcommand{\bap}{\begin{app}}
\newcommand{\eap}{\end{app}}
\newcommand{\begs}{\begin{exams}}
\newcommand{\eegs}{\end{exams}}
\newcommand{\beg}{\begin{example}}
\newcommand{\eeg}{\end{exaplem}}
\newcommand{\bpr}{\begin{proposition}}
\newcommand{\epr}{\end{proposition}}
\newcommand{\bt}{\begin{theorem}}
\newcommand{\et}{\end{theorem}}
\newcommand{\bc}{\begin{corollary}}
\newcommand{\ec}{\end{corollary}}
\newcommand{\bl}{\begin{lemma}}
\newcommand{\el}{\end{lemma}}
\newcommand{\bd}{\begin{definition}}
\newcommand{\ed}{\end{definition}}
\newcommand{\brs}{\begin{remarks}}
\newcommand{\ers}{\end{remarks}}
\newcommand{\sign}{{\text{\rm sgn }}}
\newtheorem{theo}{Theorem}[section]
\newtheorem{exams}[theo]{Examples}
\numberwithin{equation}{section}
\newcommand{\MM}{{\mathbb M}}
\newtheorem{theorem}{Theorem}[section]
\newtheorem{proposition}[theorem]{Proposition}
\newtheorem{corollary}[theorem]{Corollary}
\newtheorem{lemma}[theorem]{Lemma}
\newtheorem{definition}[theorem]{Definition}
\newtheorem{example}[theorem]{Example}
\newtheorem{remark}[theorem]{Remark}
\newtheorem{thm}{Theorem}
\newtheorem{corr}{Corollary}
\newcommand{\RM}{\mathbb{R}}
\newcommand{\ZM}{\mathbb{Z}}
\newcommand{\CM}{\mathbb{C}}
\newcommand{\HM}{\,\mbox{\bf H}}
\newcommand{\cn}{\operatorname{cn}}
\newcommand{\spec}{\operatorname{spec}}
\newcommand{\disc}{\operatorname{disc}}
\title{%Low-Frequency  Stability Analysis of Periodic Traveling Wave Solutions of the Generalized Korteweg-de Vries Equation%: Rigorous Justification of the Whitham Modulation Equations
Rigorous Justification of the Whitham Modulation Equations for the Generalized Korteweg-de Vries Equation
}
\author{\sc \small
Mathew A. Johnson\thanks{Indiana University, Bloomington, IN 47405;
matjohn@indiana.edu: Research of M.J. was partially supported by an NSF Postdoctoral Fellowship under NSF grant DMS-0902192.}
~~~~~
Kevin Zumbrun\thanks{Indiana University, Bloomington, IN 47405;
kzumbrun@indiana.edu:
Research of K.Z. was partially supported
under NSF grants no. DMS-0300487 and DMS-0801745.
 }}
\begin{document}

\maketitle

%%%%%%%%%%%%%%%%%%%%%%%%%%%%%%%%%%%%%%%%%%%%%%%%%%%%%%%%%%%%%%%%%%%%%%%%%%%%%%%%%%%%%%%%%%%%%

\begin{center}
{\bf Keywords}: Generalized Korteweg-de Vries equation; Periodic waves; Modulational stability;
Whitham theory; WKB.
\end{center}

%%%%%%%%%%%%%%%%%%%%%%%%%%%%%%%%%%%%%%%%%%%%%%%%%%%%%%%%%%%%%%%%%%%%%%%%%%%%%%%%%%%%%%%%%%%%%

\begin{abstract}
In this paper, we consider the spectral stability of spatially periodic traveling wave solutions of the generalized Korteweg-de Vries
equation to long-wavelength perturbations.  Specifically, we extend the work of Bronski and Johnson by demonstrating
that the homogenized  system describing the mean behavior of a slow modulation (WKB) approximation of the solution correctly
describes the linearized dispersion relation near zero frequency of the linearized equations about the
background periodic wave.  The latter has been shown by rigorous Evans function techniques %, i.e. Floquet theory,
to control the spectral stability near the origin, i.e. stability to slow modulations of the underlying solution.
In particular, through our derivation of the WKB approximation we generalize the modulation expansion of Whitham for the KdV to a more general class of
equations which admit periodic waves with nonzero mean.  As a consequence, we will show
that, %we provide a simple derivation of the full Whitham system for the generalized Korteweg-de Vries
%equation analogous to that provided by Whitham for the KdV case and show that,
assuming a particular non-degeneracy condition, % the Whitham system is of evolutionary type,
spectral stability near the origin is equivalent with the local well-posedness of the Whitham system.
%
%
%To this end, we use a slow modulation (WKB approximationwe provide a simple derivation of the Whitham system
%for the generalized Korteweg-de Vries equation analogous to that derived by Whitham for the KdV
%
%
%To this end, we give a simple derivation of the
%
%
%we carry out a formal Whtiham modulation theory calculation
%and find that a necessary and sufficient condition for such
%a modulational stability is the hyperbolicity of the homogenized
%equations.  This equivalence is determined by comparing the formal calculation to the rigorous modulation theory calculation
%via the Evans function, i.e. Floquet theory, recently carried out by Bronski and Johnson.
\end{abstract}

%\pagestyle{myheadings}
%\thispagestyle{plain}
%\markboth{Mathew A. Johnson Kevin Zumbrun}{Transverse Instability of gKdV}

%\clearpage
%\tableofcontents
%\clearpage
%%%%%%%%%%%%%%%%%

\bigbreak
\section{Introduction}

In the area of nonlinear dispersive waves the question of stability is of fundamental importance as it determines those
solutions which are most likely to be observed in physical applications.  In this paper, we consider the
stability of the spatially periodic traveling wave solutions of the generalized Korteweg-de Vries (gKdV) equation
\begin{equation}
u_t=u_{xxx}+f(u)_x\label{gkdv}
\end{equation}
where $u$ is a scalar, $x,t\in\RM$ and $f$ is a suitable nonlinearity.  Such equations arise in a variety of applications.
For example, the case $f(u)=u^2$ corresponds to the well known
Korteweg-de Vries (KdV) equation which arises as
a canonical model of weakly dispersive nonlinear wave propagation \cite{KdV} \cite{SH}.  Moreover, the cases $f(u)=\beta u^3$, $\beta=\pm 1$, corresponds
to the modified KdV equation which arises as a model for large amplitude internal waves in a density stratified medium, as well
as a model for Fermi--Pasta-Ulam lattices with bistable nonlinearity \cite{BM} \cite{BO}.  In each of these
two cases, the corresponding PDE can be realized as a compatibility condition for a particular Lax pair and hence
the corresponding Cauchy problem can (in principle) be completely solved via the famous inverse scattering
transform\footnote{More precisely, the Cauchy problem for equation \eqref{gkdv} can be solved via the inverse
scattering transform if and only if $f$ is a cubic polynomial.}.  However, there are a variety of applications
in which equations of form \eqref{gkdv} arise which are not completely integrable and hence the inverse scattering
transform can not be applied.  For example, in plasma physics equations of the form \eqref{gkdv}
arise with a wide variety of power-law nonlinearities depending on particular physical considerations \cite{KD} \cite{M} \cite{MS}.
Thus, in order to accommodate as many applications as possible the methods employed in this paper will not rely
on complete integrability of the PDE \eqref{gkdv}.  Instead, we will make use of the integrability of the ordinary
differential equation governing the traveling wave profiles: this ODE is always Hamiltonian, regardless
of the integrability of the corresponding PDE.

The stability of such solutions has received much attention as of recently:
for example, see
\cite{BD}, \cite{BrJ}, \cite{BrJK}, \cite{DK}, \cite{G1}, \cite{G2}, \cite{HK}, \cite{J1}.  Here, we are interested
in the spectral stability to long wavelength perturbations, i.e. to slow modulations of the underlying wave.  % background periodic solution.
There is a well developed (formal) physical theory for dealing with such problems which is known as Whitham modulation
theory \cite{W1} \cite{W2}.  This formal theory proceeds by rescaling the governing PDE via the change of variables $(x,t)\mapsto(\eps x,\eps t)$
%to yield the equation
%\[
%u_t=f(u)_x+\eps^2u_{xxx}.
%\]
%One
then uses a WKB approximation of the solution and looks for a homogenized system which describes the mean
behavior of the resulting approximation.  Heuristically then, one may expect that a necessary condition for the stability of such solutions is the
hyperbolicity-- i.e., local well-posedness-- of the resulting first order system of partial differential equations.  In order to make this
intuition rigorous, one must study in detail the spectrum of the linearized
operator about a periodic solution in a neighborhood
of the origin in the spectral plane, and compare the resulting low-frequency
stability region to that predicted by hyperbolicity of the formal
Whitham expansion.

%Such analysis was recently conducted by Bronski and Johnson \cite{BrJ}.
The first part of this
analysis was recently conducted by Bronski and Johnson \cite{BrJ}.
There, the authors studied the spectral stability of spatially periodic traveling wave solutions to \eqref{gkdv}
to long-wavelength perturbations by using periodic Evans function techniques, i.e. Floquet theory.  In particular,
an index was derived whose sign, assuming a particular non-degeneracy condition holds, determines the local structure
of the spectrum near the origin in the spectral plane.  This index arises as the discriminant of a polynomial
which encodes the leading order behavior of the Evans function near the origin for such perturbations: that is,
it describes the linearized dispersion relation near zero frequency of the corresponding linearized equations
about the background periodic wave.
Notice that the origin in the spectral plane is distinguished by the fact that the tangent space to the manifold
of traveling wave profiles can be explicitly computed, and the generalized null space of the corresponding linearized
operator is spanned by a particular basis of this tangent space (again, assuming a particular non-degeneracy condition is met).
In particular, Bronski and Johnson showed that if one assumes the conserved quantities of the PDE \eqref{gkdv} provide
good local coordinates for near by periodic traveling waves, then the generalized periodic null space
of the linearized operator is generated by taking specific variations in the parameters defining the periodic
traveling waves.  As a result, the linearized dispersion relation near zero frequency can be explicitly
computed in terms of Jacobians of various maps from the traveling wave parameters to the conserved quantities
of the PDE flow.  These Jacobians were then shown to be explicitly computable in terms of moments
of the underlying wave in the case of power-law nonlinearity $f(u)=u^{p+1}$, $p\in\mathbb{N}$ (see \cite{BrJK}).

The purpose of this paper is to carry out the second part of the program,
connecting the formal Whitham procedure with the rigorous results of \cite{BrJ}.
To motivate our approach, recall that in \cite{W2} Whitham caries out
the formal modulation approximation for the cnoidal wave solutions of the
KdV equation, in which everything can be evaluated explicitly in terms of elliptic integrals.
As a first step in his analysis, Whitham introduces a periodic potential function $\phi$ with the requirement $u=\phi_x$
where $u$ is a given solution of the KdV equation.  In effect, the introduction of this potential allows for
a Lagrangian formulation of the KdV which can can analyzed via the modulation theory presented in \cite{W2}.  However, notice that by requiring the solution
$u$ to be of divergence form one is forcing that the solution have mean zero over one period.  This
of course presents no problem in the case of the KdV equation since all solutions can be made mean zero by Galilean
invariance, but it does pose a serious problem when trying to extend Whitham's method to more general equations of form
\eqref{gkdv} which admit periodic waves of non-zero mean;
%In particular, in order to carry out the modulation approximation presented by Whitham for the generalized
%KdV you must artificially restrict your attention to periodic waves with mean zero, which is highly restrictive:
for example, the approximation would not be valid for the well known elliptic
function solutions of either the focusing or defocusing modified KdV equation.

In order to consider more general equations of the form \eqref{gkdv}, then, it becomes imperative to
find a way to side step this restriction to mean zero waves.  To this end we follow the novel approach suggested by
Serre \cite{S}, which is towork with the original variable $u$ and augment
the resulting WKB system with an additional conservation law associated with the Hamiltonian structure
of the gKdV; see also the recent work of Zumbrun and Oh \cite{OZ3} in which this approach was used
in the viscous conservation law setting.  As we will see, this approach not only works in the KdV case originally considered by Whitham,
but also for the general Hamiltonian gKdV.  In particular, we will see that the linearized dispersion relation predicted
by the linearized Whitham system correctly predicts (assuming particular non-degeneracy conditions are met) the modulational
stability of the given periodic wave.  To this end, we compare the linearized dispersion relation coming from the
modulation approximation to that derived by Bronski and Johnson using Evans function techniques.
%In particular, we validate the program proposed by Serre of deriving the Whitham
%system using additional equations/conserved quantities provided by the Hamiltonian structure of the equations.
Unlike the works of Serre and Zumbrun and Oh  \cite{S,OZ3}, however, we show the equivalence by an essentially different method
using direct computation and identities derived in \cite{BrJ}
as opposed to direct linear-algebraic comparisons.  The remarkable fact
that the resulting homogonized system can be explicitly computed in terms of
the conserved quantities of the PDE flow and the parameters defining the periodic
traveling waves seems to be very special to the case of the gKdV equation \eqref{gkdv}
in which the ODE defining the traveling wave profiles is completely integrable.
We expect that the approach proposed by Serre to show this equivalence should work
in this case as well, by using the fact that additional conservation laws gives rise to additional
first order variations, but the direct method employed here seems simpler in the present context.

The results in this paper therefore extend the original work of Whitham concerning the modulational
stability of cnoidal waves of the KdV to more general periodic waves of equations of form \eqref{gkdv},
and also rigorously validate the predictions of the Whitham expansion
as regards linearized stability about a periodic wave.
However, as pointed out in \cite{OZ3,OZ4}, this
is only the begining of the story.  The importance of the connection
between Whitham expansion and linearized stability comes rather from
the predictive power of the formal expansion in deriving more detailed
information about behavior of solutions.
See also the earlier work of Schneider \cite{S1,S2,S3}
for reaction--diffusion and other equations.
Thus, as in the related contexts \cite{S1,S2,S3,OZ3,OZ4}, we expect the
validation of this connection to lead to further important results
about these and other systems.

The outline of this paper is as follows.  In section 2 we recall the basic properties of the periodic
traveling wave solutions of \eqref{gkdv} described in \cite{BrJ} and \cite{J1}.  In particular, we discuss
a parametrization of such solutions which will be utilized throughout the present work.
In section 3, we outline the results of Bronski and Johnson concerning the low frequency asymptotics
of the periodic Evans function and describe the connection to the modulational stability
of the underlying periodic wave.  In section 4, we carry out the slow modulation (WKB) approximation
to derive the Whitham system for the gKdV equation.  In particular, we connect this formal
homogenization procedure to the low frequency analysis of Bronski and Johnson.  We then close
with a discussion and open problems.

\section{Periodic Solutions of the gKdV}\label{s:prop}

Throughout this paper, we are concerned with the periodic traveling wave solutions of the gKdV equation.
To begin then, we recall the basic properties of the periodic traveling wave solutions of \eqref{gkdv}:
for more details, see \cite{BrJ}, \cite{BrJK}, or \cite{J1}.

Such solutions are stationary solutions of \eqref{gkdv} in a moving coordinate frame of the form $x+ct$
and whose profiles satisfy the traveling wave ordinary differential equation
\begin{equation}
u_{xxx}+f(u)_x-cu_x=0.\label{travelode}
\end{equation}
This profile equation is Hamiltonian and hence can be reduced through two integrations to the nonlinear oscillator
equation
\begin{equation}
\frac{u_x^2}{2}=E+au+\frac{c}{2}u^2-F(u)\label{quad1}
\end{equation}
where $F'=f$, $F(0)=0$, and $a$ and $E$ are constants of integration.  Thus, the existence of periodic orbits of
\eqref{travelode} can verified through simple phase plane analysis: a necessary and sufficient condition is
for the effective potential energy $V(u;a,c)=F(u)-au-\frac{c}{2}u^2$ to have a local minimum.  It follows that the
periodic solutions of \eqref{travelode} form, up to translation\footnote{The translation mode is simply inherited
by the translation invariance of the governing PDE and can hence be modded out in our theory.}, a three parameter family of traveling wave solutions
of \eqref{gkdv} which we can parameterize by the constants $a$, $E$, and $c$.  In particular, on open sets in $\RM^3=(a,E,c)$
the solution to \eqref{quad1} is periodic: the boundary of these open sets corresponds to the equilibrium solutions and solitary waves (homoclinic/heteroclinic orbits).

In addition, we make the assumption that there exist simple roots $u_{\pm}$ of the equation $E=V(u;a,c)$ which
satisfy $u_-<u_+$, and that $V(u;a,c)<E$ for $u\in(u_-,u_+)$.  As a consequence, the roots $u_{\pm}$ are $C^1$
functions of the traveling wave parameters $(a,E,c)$ and, without loss of generality, we can assume that $u(0)=u_{-}$.  It follows
that the period of the corresponding periodic solution of \eqref{travelode} can be expressed by the formula
\begin{equation}
T=T(a,E,c)=\sqrt{2}\int_{u_-}^{u_+}\frac{du}{\sqrt{E-V(u;a,c)}}=\frac{\sqrt{2}}{2}\oint_{\Gamma}\frac{du}{\sqrt{E-V(u;a,c)}},\label{period}
\end{equation}
where integration over $\Gamma$ represents a complete integration from $u_{-}$ to $u_+$, and then back to $u_-$ again: notice
however that the branch of the square root must be chosen appropriately in each direction.  Alternatively, you could interpret
$\Gamma$ as a loop (Jordan curve) in the complex plane which encloses a bounded set containing both $u_-$ and $u_+$.
By a standard procedure, the above integral can be regularized at the square root branch points and hence represents
a $C^1$ function of the traveling wave parameters: see \cite{BrJ} for details.

Notice that in general, the gKdV equation admits three conserved quantities\footnote{In the case of the KdV or mKdV, the complete integrability
of the PDE implies the existence of an infinite number of conservation laws.  We will discuss this case more carefully in the following
sections.} which can be represented as
\begin{align}
M(a,E,c)&=\int_0^Tu(x)dx=\frac{\sqrt{2}}{2}\oint_{\Gamma}\frac{u~du}{\sqrt{E-V(u;a,c)}}\label{mass}\\
P(a,E,c)&=\int_0^Tu(x)^2dx=\frac{\sqrt{2}}{2}\oint_{\Gamma}\frac{u^2~du}{\sqrt{E-V(u;a,c)}}\label{momentum}\\
H(a,E,c)&=\int_0^T\left(\frac{u_x^2}{2}-F(u)\right)dx=\frac{\sqrt{2}}{2}\oint_{\Gamma}\frac{E-V(u;a,c)-F(u)}{\sqrt{E-V(u;a,c)}}du\nonumber
\end{align}
representing the mass, momentum, and Hamiltonian, respectively.  As above, these integrals can be regularized at the
square root branch points and hence represent $C^1$ functions of the traveling wave parameters.  As seen in \cite{BrJ}
and related papers, the gradients of the period, mass, and momentum plan a large role in the stability of the periodic
traveling wave solutions of \eqref{gkdv}.  For notational simplicity then, we introduce the Poisson bracket like notation
\[
\{f,g\}_{x,y}=\det\left(\frac{\partial(f,g)}{\partial(x,y)}\right)
\]
for two-by-two Jacobians, and $\{f,g,h\}_{x,y,z}$ for analogous three-by-three Jacobians.

It should be pointed out that in \cite{BrJ} it was shown when $E\neq 0$ gradients
in the period can be interchanged for gradients of the conserved quantities via the relation
\begin{equation}
E\nabla_{a,E,c}T+a\nabla_{a,E,c}M+\frac{c}{2}\nabla_{a,E,c}+\nabla_{a,E,c}H=0\label{grad}
\end{equation}
where $\nabla_{a,E,c}=\left<\partial_a,\partial_E,\partial_c\right>$.  Thus, although all results of this paper will be expressed
in terms of Jacobians involving the quantities $T$, $M$, and $P$, it is possible to re-express them completely in terms of conserved quantities
of the gKdV flow.  Such an interpretation seems possibly more natural and desired from a physical point of view.

\section{Evans Function Calculations}\label{s:Evans}

Now, suppose $u=u(\cdot;a,E,c)$ represents a particular periodic traveling wave solution of \eqref{gkdv}.
As we are interested in studying the spectral stability of this solution to localized perturbations,
we must study the linearized equation about $u_0$, namely
\[
\partial_x\mathcal{L}[u]v=-v_t
\]
where $v\in L^2(\RM)$ and the operator $\mathcal{L}[u]=-\partial_x^2-f'(u)+c$ is a self adjoint
periodic Hill operator.  Taking the Laplace transform in time yields the linearized spectral problem
\begin{equation}
\partial_x\mathcal{L}[u]v=\mu v\label{spec}
\end{equation}
where $\mu$ represents the Laplace frequency.  We refer to the background solution $u$ as being spectrally stable if the $L^2$ spectrum
of the linear operator $\partial_x\mathcal{L}[u]$ is confined to the imaginary axis: notice the spectrum
is symmetric about the real and imaginary axis.

The spectral analysis in $L^p(\RM)$, $1\leq p<\infty$,
of differential operators with periodic coefficients is known as Floquet theory.  In this theory, one
rewrites the linearized spectral problem \eqref{spec} as a first order system of the form
\begin{equation}
{\bf Y}_x=\HM(x,\mu){\bf Y}\label{system}
\end{equation}
and lets $\Phi(x,\mu)$ be a matrix solution satisfying the initial condition $\Phi(0,\mu)={\bf I}$ for all $\mu\in\CM$, where ${\bf I}$
is the standard $3\times 3$ identity matrix.  The monodromy operator $\MM(\mu)$, or the period map, is then defined to be
\[
\MM(\mu):=\Phi(T,\mu).
\]
Notice that given any vector solution ${\bf Y}$ of the above first order system, the monodromy operator acts via the formula
\[
\MM(\mu){\bf Y}(x,\mu)={\bf Y}(x+T,\mu)
\]
for all $x\in\RM$ and $\mu\in\CM$.  In particular, it follows that %the spectral problem \eqref{spec} can not admit
%any $L^2$ eigenvalues, and that $\mu\in\CM$ belongs to the spectrum $\spec(\partial_x\mathcal{L}[u])$ if and only
%if \eqref{spec} admits a nontrivial uniformly bounded solution.  As a result,
the $L^2$ spectrum of the operator $\partial_x\mathcal{L}[u]$ is purely continuous and $\mu\in\spec(\partial_x\mathcal{L}[u])$ if and only if
\[
\det(\MM(\mu)-e^{i\kappa}{\bf I})=0
\]
for some $\kappa\in\RM$.  Such an equality is equivalent with the existence of a nontrivial solution $v$ of \eqref{gkdv}
such that
\begin{equation}
v(x+mT)=e^{im\kappa}v(x)\label{floquet}
\end{equation}
for all $x\in\RM$ and $m\in\ZM$, which is equivalent with \eqref{spec} admitting a nontrivial uniformly bounded solution.

Following Gardner (see \cite{G1} and \cite{G2}), we define the periodic Evans function for our problem to be
\begin{equation}
D(\mu,\kappa)=\det\left(\MM(\mu)-e^{i\kappa}{\bf I}\right),~~(\mu,\kappa)\in\CM\times\RM.\label{Evanseqn1}
\end{equation}
The complex constant $e^{i\kappa}$ is called the Floquet multiplier and the constant $\kappa$ (which is not uniquely defined)
is called the Floquet exponent\footnote{By \eqref{floquet}, the Floquet exponent encodes a class of admissible perturbations.}.
In particular, by the above discussion we see $\mu\in\spec(\partial_x\mathcal{L}[u])$
if and only if $D(\mu,\kappa)=0$ for some $\kappa\in\RM$.  Moreover, since the coefficient matrix $\HM(\mu,x)$ in \eqref{system}
depends analytically on $\mu$, it follows that $D$ is analytic in the variables $\mu$ and $\kappa$ and is real
whenever $\mu$ is real.

From a computational viewpoint, however, it is more convenient to define the periodic Evans function
by choosing
%An alternative way to write the Evans function is to choose
a basis
$\{\Phi_j(\cdot,\mu)\}_{j=1}^3$ of the kernel\footnote{Here, we are considering the formal
operator $\partial_x\mathcal{L}[u]$ with out any reference to boundary conditions.}
of the linear operator $\partial_x\mathcal{L}[u]-\mu$ and noticing from \eqref{Evanseqn1} that
\begin{equation}
D(\mu,\kappa)=\left(\det(\Phi_j(0,\mu)\big{|}_{j=1}^3)\right)^{-1}\det\left(\Phi_j(T,\mu)-e^{i\kappa}\Phi_j(0,\mu)\big{|}_{j=1}^3\right)\label{Evanseqn2}
\end{equation}
This view is particularly convenient in our case: as a consequence of the integrability of the traveling wave ODE \eqref{travelode}
one can easily verify that the set of functions $\{u_x,u_E,u_a\}$ is linearly independent and satisfy
\[
\mathcal{L}[u]u_x=0,~~\mathcal{L}[u]u_E=0,~~\textrm{and}~~\mathcal{L}[u]u_a=-1.
\]
The first of these is a reflection of the translation invariance of \eqref{gkdv}.
Thus, by taking variations of the underlying periodic solution $u(\cdot;a,E,c)$ of \eqref{travelode}
in the traveling wave parameters generates an explicit basis for the (formal) kernel of $\partial_x\mathcal{L}[u]$.
In particular, one can easily verify that $u_x$ and a linear combination of $u_a$ and
$u_E$ constitute bonafide $T$-periodic elements of the kernel of $\partial_x\mathcal{L}[u]$,
and hence $D(\mu,0)=\mathcal{O}(|\mu|^2)$ for $|\mu|\ll 1$.  Continuing, it follows again
from \eqref{travelode} that $\partial_x\mathcal{L}u_c=-u_x$ and hence an appropriate
linear combination of $u_c$, $u_a$, and $u_E$ lies in the first $T$-periodic Jordan
chain in the translation direction and hence $D(\mu,0)=\mathcal{O}(|\mu|^3)$ for $|\mu|\ll 1$.
Moreover, one can verify that if the Jacobian $\{T,M,P\}_{a,E,c}$ is non-zero, then linear
combinations of $u_x$, $u_a$, $u_E$, and $u_c$ generate the entire generalized null space
of $\partial_x\mathcal{L}[u]$ and hence the Evans function should indeed be of order $\mathcal{O}(|\mu|^3)$
near the origin.

Using perturbation theory one can then analyze the way these solutions bifurcate
from the $(\mu,\kappa)=(0,0)$ state by using \eqref{Evanseqn2} with $\Phi_1(x,0)$, $\Phi_2(x,0)$, and $\Phi_3(x,0)$
corresponding to $u_x$, $u_a$, and $u_E$ respectively.  Moreover, it follows that the quantity
\[
\frac{\partial}{\partial \mu}\Phi_1(x,\mu)\Big|_{\mu=0}
\]
corresponds to a linear combination of $u_c$, $u_a$, and $u_E$, and that the first order variations
in the $u_a$ and $u_E$ directions can be computed via variation of parameters.  The details
of this asymptotic calculation were carried out in Lemma 2 and Theorem 3 of \cite{BrJ},
which we now recall as the main result of this section.

\begin{thm}\label{Evans}
Assume that $\{T,M,P\}_{a,E,c}$ is non-zero.  Then in a neighborhood of $(\mu,\kappa)=(0,0)$ the periodic Evans function admits
the following asymptotic expansion:
\[
D(\mu,\kappa)=-\frac{\mu^3}{2}\{T,M,P\}_{a,E,c}+\frac{i\kappa\mu^2}{2}\left(\{T,P\}_{E,c}+2\{M,P\}_{a,E}\right)+i\kappa^3+\mathcal{O}(|\mu|^4+\kappa^4).
\]
\end{thm}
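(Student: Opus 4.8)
The plan is to expand the representation \eqref{Evanseqn2} of the Evans function directly in the two small parameters $\mu$ and $\kappa$, exploiting the explicit basis of the generalized kernel of $\partial_x\mathcal{L}[u]$ at $\mu=0$. I would fix three solutions $\Phi_j(\cdot,\mu)$ of $(\partial_x\mathcal{L}[u]-\mu)\Phi=0$, analytic in $\mu$, whose values at $\mu=0$ are the phase-space lifts of $u_x,u_a,u_E$; because the normalization $(\det\Phi_j(0,\mu))^{-1}$ renders $D$ independent of the chosen analytic basis, I am free to select convenient representatives order by order in $\mu$. First I would record the fundamental jump identities obtained by differentiating the profile relation $u(x+T(a,E,c);a,E,c)=u(x;a,E,c)$ in the parameters, namely $u_x(x+T)=u_x(x)$ and
\[
u_q(x+T)-u_q(x)=-T_q\,u_x(x),\qquad q\in\{a,E,c\},
\]
together with their phase-space lifts. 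These identities encode the entire failure of periodicity (the Floquet jumps) of the building-block solutions and are the source of the period gradient $\nabla_{a,E,c}T$ that appears in the final Jacobians.

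Next I would set up the perturbation hierarchy $\Phi_j=\Phi_j^{(0)}+\mu\Phi_j^{(1)}+\mu^2\Phi_j^{(2)}+\cdots$, solving $\partial_x\mathcal{L}[u]\Phi_j^{(k)}=\Phi_j^{(k-1)}$ at each order. Using $\mathcal{L}[u]u_x=0$, $\mathcal{L}[u]u_E=0$, $\mathcal{L}[u]u_a=-1$, and $\partial_x\mathcal{L}[u]u_c=-u_x$, the first correction to the translational column is $\Phi_1^{(1)}=-u_c$ modulo kernel elements, so the Jordan chain in the translation direction enters at order $\mu$; the remaining corrections are produced by variation of parameters against $\{u_x,u_E\}$ and the periodic combination of $u_a,u_E$. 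The solvability (Fredholm) conditions for these inhomogeneous problems, and the variation-of-parameters integrals themselves, generate precisely the integrals $\int_0^T u_q\,dx$ and $\int_0^T u\,u_q\,dx$, which are, up to boundary terms proportional to $T_q$, the parameter derivatives of $M$ and $P$ from \eqref{mass} and \eqref{momentum}. This is where the conserved-quantity gradients $\nabla_{a,E,c}M$ and $\nabla_{a,E,c}P$ enter.

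I would then assemble \eqref{Evanseqn2} by expanding $e^{i\kappa}=1+i\kappa-\tfrac{\kappa^2}{2}-\tfrac{i\kappa^3}{6}+\cdots$ and writing each column of the matrix as
\[
\big(\Phi_j(T,\mu)-\Phi_j(0,\mu)\big)-(e^{i\kappa}-1)\,\Phi_j(0,\mu),
\]
the first bracket being the $\mu$-small Floquet jump controlled above and the second being $O(\kappa)$. Multilinearity organizes the result by total degree in $(\mu,\kappa)$: the leading parts of all three jump columns are proportional to the single phase vector of $u_x$, so the low-order contributions cancel and leave a cubic leading term, consistent with the order-$|\mu|^3$ behavior anticipated earlier. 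Collecting the cubic contributions, the pure $\mu^3$ term assembles the $3\times3$ determinant of $\nabla_{a,E,c}T$, $\nabla_{a,E,c}M$, $\nabla_{a,E,c}P$, giving $-\tfrac12\{T,M,P\}_{a,E,c}$; the $\kappa\mu^2$ term replaces one jump column by an unperturbed column and collapses to the combination $\{T,P\}_{E,c}+2\{M,P\}_{a,E}$; and the pure $\kappa^3$ term comes from $(e^{i\kappa}-1)$ acting on the three columns $\Phi_j(0,0)$, whose determinant is exactly the normalization $\det\Phi_j(0,0)$ (the $\mu=0$ jump terms $-T_a u_x,-T_E u_x$ drop out here, being parallel to the first column), yielding $i\kappa^3$ after division by the prefactor.

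The main obstacle is the bookkeeping of the order-$\mu^2$ corrections together with the precise matching of each variation-of-parameters integral to a clean parameter derivative of $T$, $M$, or $P$, so that recognizable Jacobians rather than opaque integrals emerge, and so that the numerical coefficients $-\tfrac12$, the factor $2$ multiplying $\{M,P\}_{a,E}$, and the normalization producing $i\kappa^3$ all come out correctly. This is precisely the delicate content of Lemma 2 and Theorem 3 of \cite{BrJ}, where the contribution of the Jordan chain generated by $u_c$ must be tracked against the self-adjoint structure of $\mathcal{L}[u]$; the non-degeneracy hypothesis $\{T,M,P\}_{a,E,c}\neq0$ then guarantees both that the leading $\mu^3$ coefficient is nonzero and that $u_x,u_a,u_E,u_c$ indeed exhaust the generalized null space.
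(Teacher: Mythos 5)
Your proposal is correct and follows essentially the same route as the paper, which likewise works from the representation \eqref{Evanseqn2} with the basis generated by $u_x$, $u_a$, $u_E$, uses $\partial_x\mathcal{L}[u]u_c=-u_x$ to capture the first Jordan chain and variation of parameters for the order-$\mu$ corrections, and defers the detailed coefficient bookkeeping to Lemma 2 and Theorem 3 of \cite{BrJ}. Your structural checks (the Floquet jump identities $u_q(x+T)-u_q(x)=-T_q u_x(x)$, the cancellation of the jump terms parallel to the translational column, and the resulting $i\kappa^3$ normalization) are all consistent with the stated expansion.
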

%
%The proof is given in Lemma 2 and Theorem 3 of \cite{BrJ}.  In particular, since $D(\mu,0)=\mathcal{O}(|\mu|^3)$ for $|\mu|\ll 1$, it follows
%that the linearized operator $\partial_x\mathcal{L}[u]$ has three $T$-periodic eigenvalues at $\mu=0$.
%
%
%
% and the asymptotic
%formula provided by Theorem \ref{Evans} encodes, to leading order, how these three eigenvalues bifurcate from $\mu=0$
%as the Floquet parameter $\kappa$ is varied.

Notice that from Theorem \ref{Evans}, spectral stability in a neighborhood of the origin is equivalent with the cubic polynomial
\begin{equation}
P(y)=-y^3+\frac{y}{2}\left(\{T,P\}_{E,c}+2\{M,P\}_{a,E}\right)-\frac{1}{2}\{T,M,P\}_{a,E,c}\label{delta1}
\end{equation}
having three real solutions, and hence modulational stability can be inferred from the discriminant of the polynomial $P$.
Indeed, if this discriminant is positive then the polynomial $P$ has three real roots and hence, using the Hamiltonian
structure of the linearized operator $\partial_x\mathcal{L}[u]$, the spectrum in a neighborhood of the origin consists
of the a triple covering of the imaginary axis.  If the discriminant is negative, however, then $P$ must have a pair
of complex roots $y_{1,2}$ with non-zero imaginary part and hence the spectrum in a neighborhood of the origin consists of the imaginary
axis with multiplicity one, along with two curves which are tangent to lines through the origin with angle $\arg(iy_{1,2})$
to the imaginary axis.  In particular, the underlying periodic traveling wave is modulationally unstable
in the latter case.  This is the main result of \cite{BrJ} concerning the stability of periodic traveling wave solutions of \eqref{gkdv}
to long wavelength perturbations.

It should be pointed out that by standard abelian integral calculations, the Jacobians arising in Theorem \ref{Evans} were shown in \cite{BrJK} to be explicitly
computable in terms of the traveling wave parameters and moments of the underlying wave over a period in the case
of a power law nonlinearity $f(u)=u^{p+1}$, $p\in\mathbb{N}$.  For example, in the case of the KdV equation
with $f(u)=u^2$ it was shown that the discriminant of $P$ takes the form
\[
\disc(P(\cdot))=\frac{(\alpha_{3,0}T^3+\alpha_{2,1}T^2M+\alpha_{1,2}TM^2+\alpha_{0,3}M^3)^2}{\disc(E-V(\cdot;a,c))}
\]
where the constants $\alpha_{j,3-j}$ represent nonlinear combinations of the traveling wave parameters $a$, $E$, and $c$.
In particular, it follows that the underlying periodic traveling wave is modulationally stable provided
the equation
\[
E-V(y;a,c)=E+ay+\frac{c}{2}y^2-\frac{1}{3}y^3=0
\]
has three real solutions, which is equivalent with the constants $(a,E,c)$ corresponding to a periodic
orbit of the traveling wave equation \eqref{travelode}.  Since every periodic traveling wave solution
of the KdV can be represented in terms of the Jacobbi elliptic function $\cn(x;\gamma)$, it follows
that the cnoidal wave solutions of the KdV are always spectrally stable to long-wavelength
perturbations as predicted by Whitham \cite{W2}.  A similar representation holds in the case of the modified KdV equations
with $f(u)=\pm u^3$ and hence a given periodic solution $u(\cdot;a,E,c)$ of the modified KdV is modulationally
stable provided the polynomial equation
\[
E+ay+\frac{c}{2}y^2-\frac{1}{4}y^4=0
\]
has four real solutions.  While the same procedure can be implemented for other power law nonlinearities
$f(u)=u^{p+1}$ with $p\in\mathbb{N}$, one must resort to numerical studies to compute the desired moments
of the underlying periodic solution.

Finally, it should also be noticed that taking $\kappa=0$ in the low frequency expansion of the Evans
function provided in Theorem \ref{Evans} suggests that the stability index
\[
\sign\left(D(\mu,0)D(\Lambda,0)\right)
\]
for real numbers $0<\mu\ll 1\ll\Lambda<\infty$ is determined\footnote{It was shown in \cite{BrJ} by elementary
asymptotic ODE theory that $D(\Lambda,0)<0$ for $\Lambda>0$ sufficiently large.} precisely by the Jacobian $\{T,M,P\}_{a,E,c}$.
As a result, the underlying periodic traveling wave solution is spectrally unstable to co-periodic perturbations
if $\{T,M,P\}_{a,E,c}<0$, and (see \cite{BrJ}) is spectrally stable to such perturbations provided that $\{T,M,P\}_{a,E,c}$
and $T_E>0$.  In particular, the low-frequency expansion in Theorem \ref{Evans} provides no information for the stability
to co-periodic perturbations in the case where the Jacobian $\{T,M,P\}_{a,E,c}$ is zero, i.e. when the map
\[
(a,E,c)\in\RM^3\to\left(T(a,E,c),M(a,E,c),P(a,E,c)\right)
\]
is a local diffeomorphism.  Thus, by \eqref{grad}, when $E\neq 0$ the nonvanishing of $\{T,M,P\}_{a,E,c}$ is equivalent with the statement
that the conserved quantities of the PDE flow defined by \eqref{gkdv} provide good local coordinates
for nearby periodic traveling waves.  This nondegeneracy condition has been seen in the stability
analysis of periodic gKdV waves in several other contexts: for example, it appears in the nonlinear (orbital) stability
analysis of such solutions \cite{BrJK} \cite{J1} to periodic perturbations, as well in the instability analysis to transverse
perturbations in higher dimensional models \cite{J3}.  In particular, using the elliptic integral methods
\cite{BrJK} it has been seen that the quantity $\{T,M,P\}_{a,E,c}$ is generically nonzero for a wide variety
of nonlinearities, including the most physically relevant examples of the KdV and modified KdV equations.
As we will see in the next section, at the level of the Whitham system
this condition is necessary for the resulting first order system of conservations to be of evolutionary type.

\section{Slow Modulation Approximation}\label{s:slow}

We now complement the rigorous results of the previous section with a formal Whitham theory calculation.
In particular, we want to show that the linearized dispersion relation $P(\cdot)$ in \eqref{delta1}
can be derived through a slow modulation (WKB) approximation, and hence that the formal
homogenization procedures suggested by Whitham \cite{W1} \cite{W2} and Serre \cite{S} correctly
describe the stability of the periodic traveling wave solutions of \eqref{gkdv} to long wavelength
perturbations.  To this end, recall from Section \ref{s:prop} that the gKdV admits a four parameter family $\mathcal{T}$ of
periodic traveling wave solutions for some triple $(a,E,c)$.  We can thus form the quotient space $\mathcal{P}:=\mathcal{T}/\mathcal{R}$
under the relation
\[
\left(u~\mathcal{R}~v\right)\iff \left(\exists \xi\in\RM;v=u(\cdot-\xi)\right).
\]
and we have the class functions
\[
T=T(\dot{u}),~~a=A(\dot{u}),~~E=E(\dot{u}),~~c=C(\dot{u}).
\]
Similarly, since the conserved quantities are translation invariant it follows
that $M$ and $P$ can be interpreted as class functions on $\mathcal{P}$ themselves.

Let $(a_0,E_0,c_0)$ correspond to a particular non-constant periodic solution $u_0$ of \eqref{travelode} in $\mathcal{P}$,
that is
\[
\dot{u}_0(x)=\bar{v}(x;a_0,E_0,c_0)
\]
for some $v\in\mathcal{P}$, where $a_0=A(\dot{u}_0)$, $E_0=E(\dot{u}_0)$, and $c_0=C(\dot{u}_0)$.  In particular, from the discussion in Section
\ref{s:prop} it follows that since $u_0$ is nonconstant the projection
\begin{align*}
u&\mapsto \dot{u}\\
\mathcal{T}&\mapsto\mathcal{P}
\end{align*}
is locally a fibration (where the fibers are circles), and hence $\mathcal{P}$ is locally of dimension three.
Now, consider equation \eqref{gkdv} in the moving coordinate frame $x+c_0t$:
\begin{equation}
u_t=u_{xxx}+f(u)_x-c_0u_x.\label{travelgkdv}
\end{equation}
A periodic solution of the corresponding traveling wave ODE is a stationary
solution of \eqref{travelgkdv}, i.e. is a solution with wave speed $s=0$ in this moving coordinate frame.
Letting $\eps>0$, we now rescale \eqref{travelgkdv} by the change of variables $(x,t)\mapsto(\eps x,\eps t)$ so that the
rescaled equation takes the form
\begin{equation}
u_t=\eps^2u_{xxx}+f(u)_x-c_0u_x,\label{travelgkdv2}
\end{equation}
where, with a slight abuse of notation, $x$ and $t$ now refer to the slow variables $\eps x$ and $\eps t$.

Following Whitham then \cite{W1} \cite{W2}, we consider a solution with a WKB type
approximation of the form
\begin{equation}
u^\eps(x,t)=u^0\left(x,t,\frac{\phi(x,t)}{\eps}\right)+\eps u^1\left(x,t,\frac{\phi(x,t)}{\eps}\right)+\mathcal{O}(\eps^2)\label{wkb}
\end{equation}
where $y\mapsto u^0(x,t,y)$ is an unknown $1$-periodic function.  It follows that the local period of oscillation of the function
$u^0$ is $\eps/\partial_x\phi$, where we assume the unknown phase \emph{a priori} satisfies $\partial_x\phi\neq 0$.
We now plug \eqref{wkb} into \eqref{travelgkdv2} and collect like powers of $\eps$.  The lowest power of $\eps$ present
is $\eps^{-1}$ which leads to the equation
\[
\phi_t\partial_y u^0=\left(\phi_x\partial_y\right)^3u^0+\left(\phi_x\partial_y\right)f(u)-c_0\left(\phi_x\partial_y\right)u^0.
\]
By defining
\begin{equation}
s=-\frac{\phi_t}{\phi_x},~~~\textrm{and}~~~\omega=\phi_x,\label{whithamvar}
\end{equation}
it follows that the $\mathcal{O}(\eps^{-1})$ equation is precisely the traveling wave ODE \eqref{travelode}
in the variable $\omega y$ with wavespeed $c_0-s$; In a similar way as above we may define class functions $S$ and $\Omega$ on $\mathcal{P}$,
respectively, associated to the quantities $s$ and $\omega$.  Moreover, we may choose $u^0(x,t,\cdot)$ to be spatially periodic and satisfy
the nonlinear oscillator equation
\begin{equation}
\frac{\left(u_y^0\right)^2}{2}=E+au^0+\frac{c_0-s}{2}\left(u^0\right)^2-F(u^0)\label{wkbquad}
\end{equation}
where $a=a(x,t)$, $E=E(x,t)$, and $s=s(x,t)$ are independent of $y$.  That is, we identify for each $(x,t)\in\RM^2$ the projection of the function
$y\mapsto u^0(x,t,y)$ into $\mathcal{P}$ as being a periodic traveling wave solution of the gKdV in of form
\[
\dot{u}^0(x,t,y)=\bar{u}(y;A(\dot{u}^0),E(\dot{u}^0),c_0-S(\dot{u}^0))
\]
for some function $\bar{u}\in\mathcal{P}$, where we have used the notation from Section \ref{s:prop}.
In particular, notice that the parameters $(A(\dot{u}^0),E(\dot{u}^0),S(\dot{u}^0))$ defining $\dot{u}^0$ in $\mathcal{P}$
depend on the slow variables $x$ and $t$ introduced above.

Continuing, we find that the $\mathcal{O}(\eps^0)$ equation reads
\[
\partial_t u^0=\partial_x f(u^0)-c\partial_x u^0+\partial_x\left(\phi_x^2\partial_y^2u^0\right)+\partial_y\left(\cdots\right)
\]
Averaging over a single period in $y$ and rescaling we find
\begin{equation}
\partial_t\left(M(\dot{u}^0)\Omega(\dot{u}^0)\right)-\partial_x G(\dot{u}^0)=0\label{whitham1}
\end{equation}
where, using $T=T(\dot{u}^0)$ for convenience,
\[
G(\dot{u}^0)=\frac{1}{T}\int_0^T\left(f(\dot{u}^0)-c\dot{u}^0+\partial_y^2\dot{u}^0\right)dy.
\]
Using \eqref{wkbquad} and the periodicity of $u^0$ in the variable $y$, it follows that
\[
G(\dot{u}^0)=\frac{1}{T}\int_0^T\left(A(\dot{u}^0)-S(\dot{u}^0)\dot{u}^0\right)dy=\left(A-SM\Omega\right)(\dot{u}^0).
\]
Moreover, the Schwarz identity $\phi_{xt}=\phi_{tx}$ provides us with the additional conservation law
\begin{equation}
\partial_t\Omega(\dot{u}^0)+\partial_x\left(S(\dot{u}^0)\Omega(\dot{u}^0)\right)=0.\label{whitham2}
\end{equation}

Now, as noted above, the manifold $\mathcal{P}$ has dimension three and hence
equations \eqref{whitham1} and \eqref{whitham2} do not form
a closed system for the unknown function $\dot{u}^0$.   In \cite{W2} this problem was remedied in the case
of the KdV equation by restricting to mean zero periodic waves, which forces the equation count
to be correct.  However, as noted in the introduction, this requirement can not be implemented in
the general case considered here since the gKdV in general admits periodic waves of non-zero mean.
In order to close the system then, we follow the approach of Serre \cite{S} and augment the above
equations with an additional conservation law arising from the Hamiltonian structure of \eqref{gkdv}.
The choice of this extra conservation law seems somewhat arbitrary where there is more than one to choose; for example,
in the case of the modified KdV where complete integrability implies the existence of infinitely many
conservation laws.  To identify a useful conserved quantity in our calculations, notice from
the Section \ref{s:Evans} that the conserved quantity $P$ must play a significant role in the modulational
stability of the underlying periodic solution.  With this as motivation, we notice from \eqref{travelgkdv}
that
\begin{align}
\left(\frac{u^2}{2}\right)_t&=u\left(u_{xxx}+f(u)_x-c_0u_x\right)\nonumber\\
&=\left(uf(u)+u u_{xx}-F(u)-\frac{c_0}{2}u^2-\frac{(u_y)^2}{2}\right)_x\label{extrawhitham}
\end{align}
Substituting \eqref{wkb} into \eqref{extrawhitham} as before and collecting like powers
of $\eps$, we find that the $\mathcal{O}(\eps^{-1})$ equation is the traveling wave ODE equation
for $u^0$, as before, multiplied by the profile $u^0$, and averaging the $\mathcal{O}(\eps^0)$
equation over a single period in $y$ yields the conservation law
\begin{equation}
\partial_t\left(P(\dot{u}^0)\Omega(\dot{u}^0)\right)-\partial_xQ(\dot{u}^0)=0\label{whitham3}
\end{equation}
where
\begin{equation}
Q(\dot{u}^0)=\frac{2}{T}\int_0^T\left(\dot{u}^0f(\dot{u}^0)+\dot{u}^0 \dot{u}^0_{yy}-F(\dot{u}^0)-\frac{1}{2}\left(\dot{u}^0_y\right)^2-\frac{c_0}{2}(\dot{u}^0)^2\right)dy.\label{Q1}
\end{equation}
Again, using \eqref{wkbquad} one finds that
\[
u^0f(u^0)-F(u^0)=\frac{c_0-s}{2}\left(u_0\right)^2+\frac{\left(u^0_y\right)^2}{2}-E-u^0u_{yy}^0
\]
and hence \eqref{Q1} reduces to
\begin{align}
Q(\dot{u}^0)&=-\frac{2}{T}\int_0^T\left(\frac{S(\dot{u}^0)}{2}\left(\dot{u}^0\right)^2+E(\dot{u}^0)\right)dy\nonumber\\
&=-\left(SP\Omega+2E\right)(\dot{u}^0)\label{Q2}.
\end{align}

The homogenized system \eqref{whitham1}, \eqref{whitham2}, and \eqref{whitham3} is a closed system
of three conservation laws in three unknowns, since $\dot{u}^0$ belongs to $\mathcal{P}$.  In particular,
\eqref{whitham1}-\eqref{whitham3} describe the mean behavior of the WKB approximation \eqref{wkb}.
This system is precisely the Whitham system we were seeking, and it can be written in closed
form as
\begin{equation}
\partial_t\left(M\Omega,P\Omega,\Omega\right)(\dot{u}^0)-\partial_x\left(A-SM\Omega,-SP\Omega-2E,-S\Omega\right)(\dot{u}^0)=0.\label{whithamsystem}
\end{equation}
We now wish to make a connection between the system \eqref{whithamsystem} and the modulational stability of the original solution
$u_0$.  To this end, we make the assumption that the matrix
\begin{equation}
\frac{\partial(\dot{u}_0)}{\partial(a,E,s)}\big{|}_{(a,E,s)=(a_0,E_0,0)}\label{evolution}
\end{equation}
is invertible, which implies that nearby periodic waves in $\mathcal{P}$ can be coordinatized by the traveling wave
parameters $(a,E,c)$ near $(a_0,E_0,c_0)$.
%
%
%the conserved quantities of the flow defined by \eqref{gkdv} provide good local coordinates in $\mathcal{P}$ for the
%nearby periodic traveling waves; this property was previously seen to be important in the analysis
%of Bronski and Johnson \cite{BrJ} outlined in the previous section, as well as the work of Johnson \cite{J1} and Bronski, Johnson, and Kapitula \cite{BrJK}.
In particular, we can parameterize the Whitham
system in terms of the variables $(a,E,s)$ near $(a_0,E_0,0)$ and hence
\begin{equation}
\partial_t\left(M\omega,P\omega,\omega\right)(a,E,s)-\partial_x\left(a-sM\omega,-sP\omega-2E,-s\omega\right)(a,E,s)=0.\label{whithamsystem2}
\end{equation}
where now we have dismissed the use of the class functions on $\mathcal{P}$ previously defined and work directly with the functions $M$, $P$, and $T=\omega^{-1}$
defined on $\RM^3$ defined in section \ref{s:prop}.

We now linearize \eqref{whithamsystem2} about the constant solution $(a,E,s)=(a_0,E_0,0)$ corresponding to the original solution $u_0$
chosen above. To begin, notice from the integral representations \eqref{period}-\eqref{momentum} that
\[
\partial_s\left<M,P,\omega\right>=-\partial_c\left<M,P,\omega\right>,
\]
and hence assuming the quantity
%
%In particular, notice that \eqref{whithamsystem} is of evolution type provided the matrix
%\begin{equation}
%\frac{\partial(M\omega,P\omega,\omega)}{\partial(\dot{u}^0)}\label{evolution}
%\end{equation}
%is nonsingular.  Assuming the matrix $\frac{\partial(\dot{u}^0)}{\partial(a,E,c)}$ is nonsingular, this is equivalent
%with the quantity
\[
\det\left(\frac{\partial(M\omega,P\omega,\omega)}{\partial(a,E,s)}\big{|}_{(a,E,s)=(a_0,E_0,0)}\right)=\frac{1}{T^4}\{T,M,P\}_{a,E,c}(a_0,E_0,c_0)
\]
is nonzero, the corresponding linearized system is of evolutionary type, i.e.the system \eqref{whithamsystem}
is of evolutionary type provided that the conserved quantities of the flow defined
by \eqref{gkdv} provide good local coordinates for the nearby periodic traveling waves.  Recall
from Section \ref{s:prop} that this property is generically true for several physically relevant nonlinearities,
and is a major technical assumption in much of the current work on the stability of such solutions: see again \cite{BrJ}, \cite{BrJK}, \cite{J1},
and \cite{J3}.  Moreover the Cauchy problem for the linearized system is locally well posed provided that it is
hyperbolic, which is equivalent with the characteristic equation
\begin{equation}
\det\left(\mu \frac{\partial(M\omega,P\omega,\omega)}{\partial(a,E,s)}
        -\frac{i\kappa}{T}\frac{\partial\left(a-sM\omega,-sP\omega-2E,-s\omega\right)}{\partial(a,E,s)}\right)(a_0,E_0,0)=0\label{wpoly1}
\end{equation}
having three real roots in the variable $\frac{i\kappa}{\mu T}$, i.e. the matrix % which is equivalent with the matrix
\[
\frac{\partial(M\omega,P\omega,\omega)}{\partial\left(a-sM\omega,-sP\omega-2E,-s\omega\right)}(a_0,E_0,0)
\]
should be uniformly diagonalizable with real eigenvalues.

To make the connection between the above (formal) modulation theory calculation and the rigorous Evans function
theory of the previous section, notice that \eqref{wpoly1} is equivalent to the vanishing of
\begin{equation}
\widetilde{P}(\mu,\kappa;a,E,s)=\det\left(\mu \frac{\partial(M\omega,P\omega,\omega)}{\partial(a,E,s)}
        -\frac{i\kappa}{T}\frac{\partial\left(a-sM\omega,-sP\omega-2E,-s\omega\right)}{\partial(a,E,s)}\right)\label{wpoly2}
\end{equation}
at the point $(a_0,E_0,0)$. % provided that the matrix
%\[
%\frac{\partial(\dot{u}^0)}{\partial(a,E,s)}
%\]
is nonsingular.  Moreover, a straightforward computation shows that
\begin{align*}
\widetilde{P}(\mu,\kappa;a_0,E_0,0)&=\mu^3\det\left(\left(
                          \begin{array}{ccc}
                            M_a\omega & M_E\omega & -M_c\omega \\
                            P_a\omega & P_E\omega & -P_c\omega \\
                            -T_a\omega^2 & -T_E\omega^2 & T_c\omega^2 \\
                          \end{array}
                        \right)
                        -\frac{i\kappa}{\mu T}\left(
                                                \begin{array}{ccc}
                                                  1 & 0 & 0 \\
                                                  0 & -2 & 0 \\
                                                  0 & 0 & -\omega \\
                                                \end{array}
                                              \right)
                        \right)\\
&=\frac{2\mu^3}{T^4}\left(-P\left(-\frac{i\kappa}{\mu}\right)+\left(2M_a-P_E-2T_c\right)\left(\frac{i\kappa}{\mu}\right)^2\right)
\end{align*}
where $P(\cdot)$ is the cubic polynomial given in \eqref{delta1} encoding the modulational stability
of the periodic traveling wave solution $u(\cdot;a_0,E_0,c_0)$.  At first sight, this result is disturbing
as it suggests that the formal WKB/homogenization calculation does not accurately describe the modulational
stability of such solutions.  However, this apparent discrepancy can be easily explained using the
integral representations of the period, mass, and momentum given in \eqref{period}-\eqref{momentum}.  Indeed,
notice from this representation that
\[
M_a=P_E=2T_c=-\frac{\sqrt{2}}{4}\oint_\Gamma\frac{u^2~du}{\left(E-V(u;a,c)\right)^{3/2}}
\]
and hence we have the relation $P_E+2T_c-2M_a=0$.  In particular, it follows that
\begin{equation}
\widetilde{P}(\mu,\kappa;a_0,E_0,0)=-\frac{2\mu^3}{T^4}P\left(-\frac{i\kappa}{\mu}\right).\label{Peqn}
\end{equation}
Notice that the root of the unexpected $\mu\mapsto-\mu$ conversion in \eqref{Peqn} stems from the fact that in the Evans function
calculation described in Section \ref{s:Evans} we considered left moving waves (waves constant in the moving coordinate frame $x+ct$), while in
homogenization calculations of the above form it is customary to use right moving waves.  Indeed, from our definition
of $s$ in \eqref{whithamvar} it follows that the quantity $\phi(x-st)$ is constant, i.e. $\phi$ corresponds
to a \emph{right} moving plane wave.  Since changing right moving waves to left moving waves is equivalent
to time reversal, which in turn arises as a $\mu\mapsto-\mu$ at the level of the linearized equations,
this added negative sign seems necessary in the framework presented here.  In particular, we have proven
the following theorem.

\begin{thm}\label{mainthm}
Assume that $\{T,M,P\}_{a,E,c}$ is non-zero and that the matrix in \eqref{evolution} is nonsingular.
Then there exists a nonzero constant $\Gamma_0$ such that the periodic Evans function admits the following
asymptotic expansion in a neighborhood of $(\mu,\kappa)=(0,0)$:
\begin{align*}
D(\mu,\kappa)&=\Gamma_0\widetilde{P}(-\mu,\kappa;a_0,E_0,0)
%\det\Big(\mu \frac{\partial(M\Omega,P\Omega,\Omega)}{\partial(a,E,s)}
%\\
%        &\quad -\frac{i\kappa}{T}\frac{\partial\left(A-SM\Omega,-SP\Omega-2E,-S\Omega\right)}{\partial(a,E,s)}\Big)\Big{|}_{(a,E,s)=(a_0,E_0,0)}\\
        +\mathcal{O}(|\mu|^4+\kappa^4).
\end{align*}
That is, the linearized dispersion relation arising from the formal WKP approximation correctly describes the true
linearized dispersion relation of the spectrum of the linearization about the underlying periodic traveling wave
at the origin.
\end{thm}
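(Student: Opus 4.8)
The plan is to assemble the statement directly from two explicit cubic expressions already in hand: the low-frequency expansion of Theorem \ref{Evans} and the reduced Whitham characteristic polynomial \eqref{Peqn}.

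First I would rewrite the leading cubic part of the Evans function in terms of the stability polynomial $P$ of \eqref{delta1}. Factoring $\mu^3$ out of the three cubic terms in Theorem \ref{Evans} and using $(i\kappa)^3=-i\kappa^3$, one verifies the algebraic identity
\[
\mu^3\,P\!\left(\frac{i\kappa}{\mu}\right)=-\frac{\mu^3}{2}\{T,M,P\}_{a,E,c}+\frac{i\kappa\mu^2}{2}\bigl(\{T,P\}_{E,c}+2\{M,P\}_{a,E}\bigr)+i\kappa^3,
\]
so that Theorem \ref{Evans} can be restated as $D(\mu,\kappa)=\mu^3\,P(i\kappa/\mu)+\mathcal{O}(|\mu|^4+\kappa^4)$. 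The right-hand side is a genuine homogeneous cubic in $(\mu,\kappa)$, the apparent pole at $\mu=0$ being cancelled by the $\mu^3$ prefactor.

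Next I would invoke the already-derived identity \eqref{Peqn}, namely $\widetilde{P}(\mu,\kappa;a_0,E_0,0)=-\tfrac{2\mu^3}{T^4}P(-i\kappa/\mu)$, whose only nontrivial input is the integral relation $P_E+2T_c-2M_a=0$ obtained by differentiating the representations \eqref{period}--\eqref{momentum} under the integral sign. Substituting $\mu\mapsto-\mu$ and tracking the two sign flips, one from $(-\mu)^3=-\mu^3$ and one from the argument $-i\kappa/(-\mu)=i\kappa/\mu$, gives
\[
\widetilde{P}(-\mu,\kappa;a_0,E_0,0)=\frac{2\mu^3}{T^4}\,P\!\left(\frac{i\kappa}{\mu}\right).
\]
Comparing the two displays shows that the leading cubic of $D$ equals $\tfrac{T^4}{2}\,\widetilde{P}(-\mu,\kappa;a_0,E_0,0)$, so the claim holds with $\Gamma_0=T^4/2$, which is nonzero since $T>0$; the error $\mathcal{O}(|\mu|^4+\kappa^4)$ is inherited verbatim from Theorem \ref{Evans}.

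The two hypotheses do no work beyond licensing these inputs: nonvanishing of $\{T,M,P\}_{a,E,c}$ guarantees that $D$ is genuinely of order $|\mu|^3$, so that Theorem \ref{Evans} captures the full leading behavior, while nonsingularity of the matrix in \eqref{evolution} is what permits the Whitham system to be coordinatized by $(a,E,s)$ and hence $\widetilde{P}$ to be formed at $(a_0,E_0,0)$ in the first place. I expect the only point demanding genuine care to be the bookkeeping of the time-reversal sign: one must confirm that the single substitution $\mu\mapsto-\mu$ is exactly what reconciles the left-moving-wave convention of the Evans calculation in Section \ref{s:Evans} with the right-moving-wave convention built into the definition \eqref{whithamvar} of $s$, which is precisely why $\widetilde{P}(-\mu,\cdot)$ rather than $\widetilde{P}(\mu,\cdot)$ appears in the statement.
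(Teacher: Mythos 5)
Your proposal is correct and follows essentially the same route as the paper: the paper's proof consists precisely of the derivation of \eqref{Peqn} (via the explicit Jacobian computation and the integral identity $M_a=P_E=2T_c$) followed by comparison with Theorem \ref{Evans}, which is exactly the comparison you carry out. Your only addition is to make the final bookkeeping explicit — verifying $D(\mu,\kappa)=\mu^3 P(i\kappa/\mu)+\mathcal{O}(|\mu|^4+\kappa^4)$ and identifying $\Gamma_0=T^4/2$ — which the paper leaves implicit but which checks out.
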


\begin{corr}\label{cor1}
Under the assumptions of Theorem \ref{mainthm}, %, assume moreover that
%\[
%\det\left(\frac{\partial(M\omega,P\omega,\omega)}{\partial(u^0)}\right)\Big{|}_{(a,E,s)=(0,0,0)}\neq 0.
%\]
%Then
a necessary condition for the spectral stability of the periodic traveling wave solution
$u_0(\cdot)=u(\cdot;a_0,E_0,c_0)$ is that the spectrum of the three-by-three matrix
\begin{align*}
&A(a,E,s):=\left(\frac{\partial\left(a-sM\omega,-sP\omega-2E,-s\omega\right)}{\partial(a,E,s)}\right)^{-1}
\left(\frac{\partial(M\omega,P\omega,\omega)}{\partial(a,E,s)}\right)\\
&~~~~~~~~~~~~=\frac{\partial(M\omega,P\omega,\omega)}{\partial\left(a-sM\omega,-sP\omega-2E,-s\omega\right)}
\end{align*}
be real at $(a_0,E_0,0)$: equivalently, that the Whitham system \eqref{whithamsystem2} be hyperbolic
at $(a_0,E_0,0)$.
\end{corr}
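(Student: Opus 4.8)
The plan is to obtain Corollary~\ref{cor1} directly from Theorem~\ref{mainthm} together with the spectral reading of the cubic $P(\cdot)$ recorded immediately after Theorem~\ref{Evans}. The logical chain I would set up is: spectral stability near the origin forces $P$ to have three real roots; the roots of $P$ are, up to a real positive rescaling and a sign, exactly the characteristic speeds of the linearized Whitham system \eqref{whithamsystem2}; and real characteristic speeds is by definition (weak) hyperbolicity. Reading these implications in sequence yields the asserted necessary condition.

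First I would recall the dichotomy established below Theorem~\ref{Evans}: to leading order the $L^2$ spectrum of $\partial_x\mathcal{L}[u]$ near the origin is controlled by the roots $y_j$ of $P(y)$, being a triple covering of the imaginary axis when the $y_j$ are all real and otherwise containing curves tangent to lines making nonzero angle with the imaginary axis; hence a \emph{necessary} condition for spectral stability of $u_0$ is that $P$ have three real roots. Second, I would reduce the characteristic equation \eqref{wpoly1}. The hypotheses --- nonsingularity of \eqref{evolution} and $\{T,M,P\}_{a,E,c}\neq0$ --- guarantee that the numerator Jacobian $B:=\frac{\partial(M\omega,P\omega,\omega)}{\partial(a,E,s)}$ has determinant $T^{-4}\{T,M,P\}_{a,E,c}\neq0$, while the flux Jacobian $C:=\frac{\partial(a-sM\omega,-sP\omega-2E,-s\omega)}{\partial(a,E,s)}$ is nonsingular at $s=0$ because its determinant there is a nonzero multiple of $\omega=T^{-1}$; thus $A=C^{-1}B$ is well defined at the base point, and since \eqref{wpoly1} reads $\det(\mu B-\tfrac{i\kappa}{T}C)=0$, factoring out $C$ gives $\det(\mu A-\tfrac{i\kappa}{T}\mathbf{I})=0$, so the roots of \eqref{wpoly1} in the variable $\tfrac{i\kappa}{\mu T}$ are precisely the eigenvalues of $A$. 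Third, I would invoke the already-derived identity \eqref{Peqn}, $\widetilde{P}(\mu,\kappa;a_0,E_0,0)=-\tfrac{2\mu^3}{T^4}P(-i\kappa/\mu)$, which shows that the characteristic roots of the Whitham system coincide with the roots of $P$ up to multiplication by the real positive quantity $T$ and an overall sign; consequently $\spec A\subset\RM$ if and only if $P$ has only real roots. Chaining the three steps yields spectral stability $\Rightarrow$ $P$ has real roots $\Rightarrow$ $\spec A(a_0,E_0,0)\subset\RM$ $\Rightarrow$ hyperbolicity of \eqref{whithamsystem2}, which is the claim.

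The substantive analytic work --- the leading-order Evans expansion of Theorem~\ref{Evans} and the computation of $\widetilde{P}$ culminating in \eqref{Peqn} --- is already in hand, so the proof of the corollary is essentially bookkeeping and the main obstacle is interpretive rather than one of estimation. The one point I would treat with care is the invertibility needed to pass from ``\eqref{wpoly1} has real roots'' to ``$\spec A\subset\RM$'': both $B$ and $C$ must be nonsingular at $(a_0,E_0,0)$, which I would verify from the nondegeneracy hypotheses as above, and one must identify the generalized eigenvalues of the pencil $(B,C)$ with the eigenvalues of $A$ --- here the two orderings $C^{-1}B$ and $BC^{-1}$ appearing in the definition of $A$ are similar, hence cospectral, so either reading is harmless for a statement about reality of the spectrum. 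Finally, I would observe that the $\mu\mapsto-\mu$ time-reversal sign tracked in \eqref{Peqn} is immaterial here, since the conclusion concerns only the \emph{reality} of $\spec A$, a property invariant under $\mu\mapsto-\mu$ and under scaling by the real factor $T$, so the necessary condition is insensitive to it.
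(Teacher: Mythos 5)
Your proposal is correct and follows essentially the paper's own route: the paper likewise first verifies invertibility of the flux Jacobian at $(a_0,E_0,0)$ from the two nondegeneracy hypotheses, and then uses Theorem \ref{mainthm} (i.e., the identity \eqref{Peqn}) to identify each eigenvalue $\eta$ of $A(a_0,E_0,0)$ with a root of the leading-order dispersion relation, hence with a spectral branch $\mu=\frac{i\eta\kappa}{T}+\mathcal{O}(\kappa^2)$, so that a non-real $\eta$ forces spectrum off the imaginary axis. Your version merely runs this implication in the direct rather than contrapositive direction, chaining through the reality of the roots of the cubic $P(\cdot)$ via the dichotomy recalled after Theorem \ref{Evans}; the ingredients and content are the same.
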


\begin{proof}
First, a straight forward calculation shows that the matrix
\[
\frac{\partial\left(a+sM\omega,sP\omega-2E,-s\omega\right)}{\partial(a,E,s)}\Big{|}_{(a,E,s)=(a_0,E_0,0)}
\]
is invertible provided $\{T,M,P\}_{a,E,c}$ is non-zero at $(a,E,c)=(a_0,E_0,c_0)$ and the matrix in \eqref{evolution} is nonsingular
at this special point.  Moreover, if the matrix $A(a_0,E_0,0)$ had a nonzero eigenvalue $\eta$, then it follows from Theorem \ref{mainthm} that there
is a branch of spectrum bifurcating from the origin admitting and asymptotic expansion
\[
\mu=\frac{i\eta\kappa}{T}+\mathcal{O}(\kappa^2)
\]
for $|\kappa|\ll 1$.  Thus, if $\eta$ has non-zero imaginary part one immediately has a modulational instability of the underlying
periodic traveling wave solution.
\end{proof}

It follows that we have established that a necessary condition for the spectral stability of a periodic traveling wave solution
of \eqref{gkdv} is that the Whitham system \eqref{whitham1}, \eqref{whitham2}, and \eqref{whitham3} be hyperbolic at the
corresponding solution.  Moreover, from the analysis in \cite{BrJ}, if the matrix $A(a_0,E_0,0)$ has only simple real eigenvalues, it follows
that the underlying periodic wave is modulationally stable.  Indeed, the Hamiltonian structure of the linearized operator
$\partial_x\mathcal{L}[u]$ implies the spectrum is symmetric about the imaginary and real axes, and hence if
the matrix $A(a_0,E_0,0)$ has real distinct eigenvalues $\alpha_1$, $\alpha_2$, and $\alpha_3$, then there are three branches $\mu_j(\kappa)$
which bifurcate from the origin with leading order expansions $\mu_j(\kappa)=\frac{i\alpha_j\kappa}{T}+\mathcal{O}(\kappa^2)$.
If one did not have spectral stability near the origin, then two of the branches, %must coincide, say $\mu_1(\kappa)=\mu_2(\kappa)$,
say $\mu_1$ and $\mu_2$, would satisfy $\mu_1(\kappa)=-\overline{\mu_2(\kappa)}$.
It follows by equating like powers of $\kappa$ that $\alpha_1=\alpha_2$, contradicting the simplicity of
the spectrum of $A(a_0,E_0,0)$.  Thus, the strict hyperbolicity of the Whitham system is sufficient to conclude modulational stability
of the underlying wave.
%
%
%
%that modulational stability of the periodic traveling wave is equivalent with the hyperbolicity of the Whitham system at
%the underlying solution, and hence concerning modulational stability one has a partial converse to Corollary \ref{cor1} provided by the
%rigorous Evans function approach.

%
%\br
%The above-mentioned result on stability is somewhat remarkable,
%depending on additional structure of the model.
%In general, well-posedness of the Whitham system can be expected
%to yield only information on {\rm long-wave}, or modulational stability,
%and not stability to high-frequency perturbations.
%\er

%\section{Conclusions \& Discussion}
\section{Discussion and open problems}

In this paper, we have rigorously verified that the formal homogenization procedure introduced by Whitham to study
the modulational stability of a periodic traveling wave profile of the gKdV equation does indeed describe
the spectral stability near the origin.
This in particular applies to the KdV case for which the WKB
expansion is carried out in Chapter 14 of Whitham \cite{W2} by
Lagrangian methods making use of integrability to carry out
computations in terms of elliptic integrals.
(The rigorous connection between the Lagrangian formulation and the
type of WKB expansion carried out here is an important result of \cite{W2}.)

Recall from Section \ref{s:slow} that there is at first sight
a ``missing equation'' in the WKB equation for (KdV) and (gKdV).
This is remedied in \cite{W2} by introducing a ``potential'' consisting
of the anti-derivative of the solution, thus viewing the equation as
a one higher order PDE for which the equation count is correct.
Here, we follow instead the approach suggested by Serre \cite{S} of augmenting
the system by an additional conservation law associated with the integral
of motion coming from Hamiltonian structure.
This approach works not only in the integrable KdV and mKdV cases, but also
for the general Hamiltonian (gKdV).
The derivation of the Whitham equations seems also slightly simpler from
this point of view.  Indeed, using this approach of Serre together with
identities developed by Bronski and Johnson \cite{BrJ}, it seems remarkable
that we are able to obtain the Whitham system \eqref{whithamsystem} explicitly in terms
of moments of the underlying periodic wave and the traveling wave parameters.  Because
of this, the verification that the hyperbolicity of the Whitham system correctly describes
the spectral stability near the origin proved to be much simpler and straight forward
than cases previously considered by Oh and Zumbrun \cite{OZ3} and Serre.

This issue of a missing equation comes up when there are more periodic
solutions than would generically expected \cite{S}, as arises when
there is an integral of motion for the traveling wave profile ODE.
Serre states on p. 262 of \cite{S} that
``a supplementary integral has a counterpart at the profile level'',
that is, such ODE integrals of motion often come from conserved quantities
The ``supplementary integral'' he refers to is exactly an additional
conservation law, which he proposes to use as above
to fill the same missing equation it causes.
That is, this same basic approach, by the argument of Serre, should
work in a wide range of situations.

In situations such as the KdV, where there may be still further conservation
laws, there is flexibility in deriving the Whitham system, but all such
derived systems must be equivalent.  This would appear to
correspond to existence for the Whitham system of additional conservation laws, or hyperbolic ``entropies'', another
interesting possibility pointed out by Serre.

We here just briefly investigate further the
statement of Serre that supplementary PDE integrals
should imply also supplementary ODE integrals.
We find justification for this
statement in two very general situations.

1. (more general). If by a supplementary integral, we mean an ``entropy''
in the conservation law sense, i.e., a function $\eta(u)$ of the
unknown and no derivatives,
such that $\eta(u)= \partial_x(...)$, then this is indeed always true.
(Note: the quantity $\int \eta(u(x))dx$ is then conserved in time.)

\begin{proof}
Proof:   we then have additional traveling wave relation
$-s\eta(u)+q(u)+ C = L(u,u_x,...),$
where $C$ is a constand of integration and $L$ is an expression
of the same order as the traveling-wave ODE.
But, combined with existing equations, we can then eliminate the highest
derivative to get an algebraic relation between some lower derivatives,
i.e., and integral of motion of the traveling-wave profile ODE.
\end{proof}

2. In the Hamiltonian case, both properties can be inherited from an overarching
Lagrangian structure, as in the multi-symplectic form system studied
by many authors; see for example \cite{BD1,BD2}.
This may tie in an interesting way to the variational approach of Witham.

\medskip
We cite as open problems the rigorous
verification of the Witham approximation at the level of behavior,
either in the small $\eps$ limit as in \cite{GMWZ}, or in large-time
behavior as in \cite{S1,S2,S3,OZ4}.  Moreover, it is expected that a similar homogenization procedure
can be used to justify the Whitham equations for the generalized Benjamin-Bona-Mahony equation, where
the low frequency asymptotics were recently computed in a similar way as to the gKdV case: see \cite{J2}.

\medbreak
{\bf Acknowledgement.}
Many thanks to Jared Bronski for providing several useful insights into the connection of this work to that
presented in \cite{W2} by Whitham.  Also, we thank the reviewer for a careful and detailed check of the formulas
and derivations presented in section four.

%%%%%%%%%%%%%%%%%%%%%%%%%%%%%%%%%%%%%%%%%%%%%%%%%%%%%%%%%%%%%%%%%%%%%%%%%%%%%%%%%%%%%%%%%%%%%%%%%%%%%%%%%%%%%%%%

\end{document}